 \newtheorem{thm}{Theorem}[section]
 \newtheorem{lem}[thm]{Lemma}
 \newtheorem*{theo}{Theorem}
 \theoremstyle{definition}
 \newtheorem{defn}[thm]{Definition}
 \theoremstyle{remark}
 \newtheorem{rem}[thm]{Remark}
 \numberwithin{equation}{section}
\newcommand{\ccomma}{\mathpunct{\raisebox{0.5ex}{,}}}
\DeclareMathOperator{\tr}{trace}
\begin{document}

%
%
%
%
%

\title[The matrix function \(e^{tA+B}\)]
 {The matrix function \(\boldsymbol{e^{tA+B}}\) is representable as
 the Laplace transform of a matrix measure.}
\author[V. Katsnelson]{Victor Katsnelson}
\address{%
Department of Mathematics\\
The Weizmann Institute\\
76100, Rehovot\\
Israel}
\email{victor.katsnelson@weizmann.ac.il; victorkatsnelson@gmail.com}
\subjclass{Primary 15A16; Secondary 44A10}
\keywords{Matrix exponential, Lie product formula, bilateral Laplace transform, BMV
conjecture.}
\date{July 1, 2016}
\begin{abstract}
Given a pair \(A,B\) of matrices of size \(n\times n\), we consider the matrix function \(e^{tA+B}\) of the variable  \(t\in\mathbb{C}\). If the matrix \(A\) is Hermitian, the matrix function   \(e^{tA+B}\) is representable
as the bilateral Laplace transform of a matrix-valued measure \(M(d\lambda)\) compactly supported on the real axis:
\[e^{tA+B}=\int{}e^{t\lambda}\,M(d\lambda).\]
 The values of the measure \(M(d\lambda)\) are matrices of size \(n\times n\), the support of this measure is contained in
 the convex hull of the spectrum of \(A\). If the matrix \(B\) is also Hermitian,
then the values of the measure  \(M(d\lambda)\) are Hermitian matrices. The measure \(M(d\lambda)\) is not necessarily non-negative.
\end{abstract}
\maketitle

\textbf{Notation}\\
\(\mathbb{C}\) is the set of complex numbers.\\
\(\mathbb{R}\) is the set of real numbers.\\
\(\mathbb{R}^{+}\) is the set of non-negative real numbers.\\
\(\mathbb{N}\) is the set of natural numbers.\\
\(\mathfrak{M}_{n}\) is the set of matrices of size \(n\times{}n\) with entries belonging to \(\mathbb{C}\).\\
\(\mathfrak{M}_{n}^{+}\) is the set of matrices of size \(n\times{}n\) with entries belonging to \(\mathbb{R}^+\).\\
\(\mathfrak{H}_{n}\) is the set of Hermitian matrices of size \(n\times{}n\).\\
\(\mathfrak{D}_{n}\) is the set of diagonal matrices of size \(n\times{}n\).\\
\(I_n\) is the identity matrix of size \(n\times{}n\).\\
We provide the set \(\mathfrak{M}_{n}\) with the usual algebraic operations - the matrix addition and the matrix multiplication.\\[0.9ex]

\section{The goal of the present paper.}
\label{sec1}
\noindent

Let \(A\in\mathfrak{H}_n\) and \(B\in\mathfrak{M}_n\). In the present paper
we consider the matrix function \(L(t)=e^{tA+B}\) of the complex variable \(t\).
We show that this function is representable as the bilateral Laplace transform of
some matrix valued measure \(M(d\lambda)\):
\begin{equation}
\label{LR}
e^{tA+B}=\int{}e^{t\lambda}\,M(d\lambda),\quad t\in\mathbb{C},
\end{equation}
the values  of the measure \(M\) belong to the set \(\mathfrak{M}_n\).

Our considerations are based on the functional calculus for the matrix~\(A\).
We relate the following objects with the matrix \(A\):\\[0.5ex]
1.\,The spectrum \(\sigma(A)\) of the matrix \(A\), that is the set \(\{\lambda_1,\,\ldots\,,\lambda_l\}\) of all its eigenvalues taken without
multiplicities, i.e. \(\lambda_p\not=\lambda_q,\,\forall\,p\not=q,1\leq p,q\leq l\).
Since \(A\in\mathfrak{H}_n\),
\(\sigma(A)\subset\mathbb{R}\).
(The number \(l\) is the cardinality of the set \(\sigma(A)\), \(l\leq{}n.\) If
the spectrum \(\sigma(A)\) is simple, then \(l=n\).)
\\
2. The set \(\{E_{\lambda_1},\,\ldots\,,E_{\lambda_l}\}\) of spectral projectors
of the matrix \(A\):
\begin{gather}
\label{SP}
AE_{\lambda_j}=\lambda_jE_{\lambda_j},\quad 1\leq j\leq l,\\
E_{\lambda_1}+\,\cdots\,+E_{\lambda_l}=I_n.
\end{gather}

If \(f(\lambda)\) is a function defined on the spectrum \(\sigma(A)\), then
\begin{equation}
\label{f(A)}
f(A)=\sum\limits_{1\leq j\leq l}f(\lambda_j)E_{\lambda_j}.
\end{equation}
In particular,
\begin{equation}
\label{e(A)}
e^{tA}=\sum\limits_{1\leq j\leq l}e^{t\lambda_j}E_{\lambda_j}.
\end{equation}
If the matrices \(A\) and \(B\) commute, that is if
\begin{equation}
\label{com}
AB=BA,
\end{equation}
then
\begin{equation}
\label{eom}
e^{tA+B}=e^{tA}\cdot e^{B}.
\end{equation}
From \eqref{e(A)} and \eqref{eom} it follows that under the condition \eqref{com} the equality
\begin{equation}
\label{ir}
e^{tA+B}=\sum\limits_{1\leq j\leq l}e^{t\lambda_j}M(\{\lambda_j\})
\end{equation}
holds, where
\begin{equation}
\label{am}
M(\{\lambda_j\})=E_{\lambda_j}e^{B}E_{\lambda_j}.
\end{equation}
 \emph{The equality \eqref{ir} can be interpreted as the representation of the
matrix function \(e^{tA+B}\) in the form of the bilateral Laplace transform \eqref{LR} of a very special
matrix valued measure \(M\). This measure \(M\) is discrete and is supported
 on the spectrum \(\sigma(A)\) of the matrix \(A\). The point \(\{\lambda_j\}\in\sigma(A)\) carries the "atom" \(M(\{\lambda_j\})\)}.

\emph{The goal of the present paper is to obtain the representation of the matrix function
\(e^{tA+B}\) in the form \eqref{LR} without assuming that the matrices \(A\) and \(B\) commute.}

The representation of the form \eqref{LR} was suggested by the following
\begin{theo} \textup{(H.Stahl)}.
Let  matrices \(A\) and \(B\) be given, \(A\in\mathfrak{H}_n\), \(B\in\mathfrak{H}_n\). Let \(\lambda_{\textup{min}}\) and
    \(\lambda_{\textup{max}}\) be the smallest and the largest eigenvalues of
    the matrix~\(A\).
    Then the function \(\tr{}e^{At+B}\) is representable in the form
    \begin{equation}\label{StR}
    \tr{}e^{tA+B}=\!\!\int\limits_{[\lambda_{\textup{min}},\lambda_{\textup{max}}]}\!\!
    e^{t\lambda}\mu(d\lambda),
    \end{equation}
    where \(\mu(d\lambda)\) is a non-negative Borel measure.
\end{theo}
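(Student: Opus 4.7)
My plan is to split Stahl's theorem into two halves: first establish the Laplace-type representation with a possibly signed scalar measure, and then show that the measure is actually non-negative. The first half is an immediate corollary of the paper's main representation \eqref{LR}. Under $A,B\in\mathfrak{H}_n$ the matrix-valued measure $M(d\lambda)$ takes Hermitian values and is compactly supported inside $[\lambda_{\textup{min}},\lambda_{\textup{max}}]$, so termwise trace yields
\begin{equation*}
\tr e^{tA+B}=\int e^{t\lambda}\,\mu(d\lambda),\qquad \mu(d\lambda):=\tr M(d\lambda),
\end{equation*}
with $\mu$ a real-valued signed Borel measure whose support lies in $[\lambda_{\textup{min}},\lambda_{\textup{max}}]$. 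Thus the representation and the support assertion of Stahl's theorem are automatic; only the sign of $\mu$ still needs work.

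To prove $\mu\ge 0$ I would first try a Hausdorff moment argument. The moments
\begin{equation*}
m_n=\int\lambda^n\,\mu(d\lambda)=\frac{d^n}{dt^n}\tr e^{tA+B}\bigg|_{t=0}
\end{equation*}
can be written combinatorially as sums of traces of words in $A$ and $B$, and $\mu\ge 0$ is equivalent to positivity of all Hankel forms built from the sequence $\{m_n\}$. A parallel attempt is to verify, after a suitable shift, that $t\mapsto \tr e^{-tA+B}$ is completely monotone on $\mathbb{R}_{+}$ in the sense of Bernstein. Both are combinatorial reformulations of the BMV conjecture, and the accumulated evidence is that neither can be closed by direct manipulation of trace identities; this is the genuine content of the problem.

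The approach that actually succeeds is H.~Stahl's. One views the eigenvalues $\lambda_1(t),\dots,\lambda_n(t)$ of $tA+B$ as branches of an algebraic function of $t\in\mathbb{C}$ cut out by the characteristic polynomial $\det(\lambda I-tA-B)=0$, pulls them back to single-valued meromorphic functions on the associated compact Riemann surface $\mathcal{R}$, and writes $\tr e^{tA+B}=\sum_{j}e^{t\lambda_{j}(t)}$ as a sum over the sheets of $\mathcal{R}$. A careful saddle-point / steepest-descent analysis along contours cut on $\mathcal{R}$ identifies the density of $\mu$ with the balayage onto the real axis of a positive potential-theoretic measure supported on a system of critical cycles of $\mathcal{R}$, and non-negativity is built into the construction. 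The main obstacle, and the point at which purely matrix-theoretic tools are insufficient, is precisely this final step: extracting the sign of $\mu$ from the global analytic structure of the algebraic function $\lambda(t)$. The matrix representation \eqref{LR} delivers the analytic form of Stahl's theorem essentially for free in the Hermitian case, but it carries no a priori sign information, and bridging that gap requires the full potential theory on Riemann surfaces of algebraic functions developed by Stahl.
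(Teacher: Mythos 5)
The paper does not prove Stahl's theorem; it states it as a known result with references to Stahl \cite{S1,S2,S3} and to Eremenko's simplified exposition \cite{E1,E2}, and explicitly remarks that Stahl's theorem \emph{does not follow} from the paper's own main result, since the matrix measure \(M(d\lambda)\) constructed there is Hermitian-valued but not necessarily non-negative (Section \ref{NNN} even exhibits a \(2\times 2\) counterexample to non-negativity of \(M\)). Your first half is correct and matches what the paper itself observes: under \(A,B\in\mathfrak{H}_n\), taking traces in \eqref{MaEq} yields \(\tr e^{tA+B}=\int e^{t\lambda}\,\mu(d\lambda)\) with \(\mu=\tr M\) a real signed Borel measure supported in \([\lambda_{\textup{min}},\lambda_{\textup{max}}]\); this is precisely the comparison the paper draws between \eqref{StR} and \eqref{MaEq}. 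You also correctly identify non-negativity of \(\mu\) as the genuine content.

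However, as a \emph{proof} of the stated theorem your proposal has a fundamental gap: the non-negativity of \(\mu\) is never established. The Hausdorff-moment and complete-monotonicity reformulations you mention are only restatements of what must be proved, and you acknowledge they cannot be closed by direct manipulation. The description of Stahl's Riemann-surface and balayage argument is a narrative summary, not an argument that could be checked; no contour, saddle point, potential, or positivity estimate is actually produced. So the hard half of the theorem — the only half that is not an immediate corollary of the paper's representation — is missing entirely. Your assessment of \emph{why} it is hard, and of the relationship between the paper's Laplace-transform representation and Stahl's theorem, is accurate and agrees with the paper's own commentary; but this is an explanation of the difficulty, not a proof.
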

The first arXiv version of Stahl's Theorem appeared in \cite{S1}, the latest arXiv version -
in \cite{S2}, the journal publication - in \cite{S3}.
The proof of Stahl is based on ingenious considerations related to
Riemann surfaces of algebraic functions. In \cite{E1},\cite{E2} a simplified version of Stahl proof is presented. The proof, presented in \cite{E1},\cite{E2}, preserves all the main ideas of Stahl; the simplification
consists in technical details. In the paper \cite{K} a proof of Stahl's Theorem for the special case
\(\textup{rank}\,A=1\) is presented. This proof is based on an elementary argument which
does not require complex analysis.

The main result of the present paper is Theorem \ref{MaT}. Stahl's Theorem does
not follow from our Theorem \ref{MaT}. If \(A\in\mathfrak{H}_n\) and \(B\in\mathfrak{H}_n\), then the measure \(M(d\lambda)\) in \eqref{LR} is \(\mathfrak{H}_n\)-valued but not necessarily is non-negative. An appropriate example is given in Section \ref{NNN}.

\section{The approximant \(\boldsymbol{L_{N}(t)}\).}
\label{LAp}
If the matrices \(A\) and \(B\) do not commute, then the equality \eqref{eom} breaks down.
However the Lie product formula, which is a kind of surrogate for the formula \eqref{eom}, holds regardless of the condition \eqref{com}.\\

\noindent
\textbf{Lie Product Formula.} \emph{Let \(X\in\mathfrak{M}_n\) and \(Y\in\mathfrak{M}_n\).  Then}
\begin{equation}
\label{LPrFo}
e^{X+Y}=\lim\limits_{N\to\infty}\big(e^{X/N}e^{Y/N}\big)^{N}.
\end{equation}
 Versions of proof of the Lie Product Formula\footnote{
 It should be mention that the Lie Product Formula can be extended to certain unbounded linear operators \(X\) and \(Y\) acting in a Hilbert space. First such extension was done
by Trotter, \cite{Tr}. A version of the Trotter-Lie product formula was obtained by
T.Kato, \cite{Ka1}.
We refer also to the book of B.\,Simon, \cite{Si}. See Theorems 1.1 and 1.2 there.
In \cite{Si}, the Lie-Trotter formula is used for the path integral formulation of quantum mechanics.
 } can be found in \cite{H}, \cite[Section 6.5]{HJ}, \cite[Theorem 2.10]{Ha}, \cite[Section 2.12, Corollary 2.12.5]{Va}.\\[2.0ex]

\begin{lem}
Let \(A\in\mathfrak{M}_n,\,B\in\mathfrak{M}_n\) and \(t\in\mathbb{C}\). Then
\begin{equation}
\label{LPF}
e^{tA+B}=\lim\limits_{N\to\infty}\Big(e^{tA/N}e^{B/N}\Big)^{N}.
\end{equation}
\end{lem}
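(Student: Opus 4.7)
The plan is to derive the lemma as a direct specialization of the Lie Product Formula stated immediately above it. The Lie Product Formula asserts that for any \(X,Y\in\mathfrak{M}_n\) one has
\[
e^{X+Y}=\lim_{N\to\infty}\bigl(e^{X/N}e^{Y/N}\bigr)^{N}.
\]
Since the lemma hypothesizes \(A,B\in\mathfrak{M}_n\) and \(t\in\mathbb{C}\), and since \(\mathfrak{M}_n\) is closed under multiplication by complex scalars, the matrix \(tA\) again belongs to \(\mathfrak{M}_n\). Hence the substitution \(X:=tA\), \(Y:=B\) is legitimate.

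With this substitution, \(X+Y=tA+B\), while \(X/N=tA/N\) and \(Y/N=B/N\). Plugging into the Lie Product Formula yields
\[
e^{tA+B}=\lim_{N\to\infty}\bigl(e^{tA/N}e^{B/N}\bigr)^{N},
\]
which is precisely \eqref{LPF}. Thus the lemma requires no further argument beyond verifying that introducing the complex scalar \(t\) does not take us outside the hypotheses of the general formula.

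Since the Lie Product Formula is cited from the literature (\cite{H}, \cite[Section 6.5]{HJ}, \cite[Theorem 2.10]{Ha}, \cite[Section 2.12]{Va}) and stated in the excerpt, there is no genuine obstacle in this proof; the only conceptual point to flag is that the parameter \(t\) plays no distinguished role and may be absorbed into the first matrix. In other words, the present lemma is a rewording of the Lie Product Formula adapted to the scaling convention \(X=tA\) used throughout the rest of the paper.
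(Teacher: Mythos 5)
Your proof is correct and is essentially identical to the paper's: the paper likewise obtains \eqref{LPF} as the special case of the Lie Product Formula \eqref{LPrFo} with the substitution \(X=tA\), \(Y=B\). No further comment is needed.
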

\begin{proof}
The formula \eqref{LPF} is a special case of the formula \eqref{LPrFo}
corresponding to the choice \(X=tA,\,Y=B\).
\end{proof}
\begin{defn}
The   expression
\begin{equation}
\label{LA}
L_N(t)=\Big(e^{tA/N}e^{B/N}\Big)^{N}
\end{equation}
which appears on the right hand side of \eqref{LA} is said to be the \emph{N-approximant for the
matrix function \(L(t)=e^{tA+B}\). }
\end{defn}

Assuming that \(A\in\mathfrak{H}_n\), we express the matrix function \(e^{tA/N}\)
in terms of the spectrum \(\sigma(A)\) of the matrix \(A\) and its spectral projectors:
\begin{equation}
\label{es}
e^{tA/N}=\sum\limits_{1\leq j\leq l}e^{t\frac{\lambda_j}{N}}E_{\lambda_j}.
\end{equation}
Substituting \eqref{es} into \eqref{LA}, we represent the approximant \(L_{N}(t)\) as a multiple sum which contains \(l^N\) summands:
\begin{equation}
\label{EEL}
L_{N}(t)=\sum\limits_{k_1,\ldots,k_N}\textup{exp}\,
\Big\{t\,\tfrac{\lambda_{k_1}+\,\ldots\,+\lambda_{k_N}}{N}\Big\}M_{k_1,\,\ldots,\,k_N}.
\end{equation}
In \eqref{EEL}, the summation is extended over all integers \(k_1,\ldots,k_N\) such that
\(1\leq~k_p\leq~l\), \(p=1,2,\ldots,N.\) The matrix \(M_{k_1,\,\ldots,\,k_N}\)
is the product
\begin{equation}
\label{Pr}
M_{k_1,\,\ldots,\,k_N}=E_{\lambda_{k_1}}e^{B/N}E_{\lambda_{k_2}}e^{B/N}\,\ldots\,
E_{\lambda_{k_N}}e^{B/N}.
\end{equation}
Let us consider the numbers \(\tfrac{\lambda_{k_1}+\,\ldots\,+\lambda_{k_N}}{N}\)
which appear in the exponents of the exponentials in \eqref{EEL}.
\begin{lem}
Given integers \(k_1,\ldots,k_N\) satisfying the conditions
\(1\leq k_p\leq l\), \(p=1,2,\ldots,N\), then
\begin{equation}
\label{rlc}
\frac{\lambda_{k_1}+\,\ldots\,+\lambda_{k_N}}{N}=\frac{n_1}{N}\lambda_1+
\frac{n_2}{N}\lambda_2+\,\cdots\,+\frac{n_l}{N}\lambda_l,
\end{equation}
where
\begin{equation}
\label{dnj}
n_j(k_1,\ldots,k_N)=\#\{p:\,1\leq p\leq N,\,k_p=j\},\quad 1\leq j\leq l.
\end{equation}
The numbers
\begin{equation}
\xi_j=\frac{n_j}{N},\,j=1,2,\,\ldots\,l,
\end{equation}
where the \(n_j\) are defined by \eqref{dnj},
 satisfy the conditions
 \begin{equation}
 \label{CC}
\xi_j\geq0,\,j=1,2,\,\ldots\,l,\quad\sum\limits_{1\leq{}j\leq{}l}\xi_j=1.
\end{equation}
\end{lem}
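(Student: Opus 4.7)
The plan is straightforward: this is essentially a counting/rearrangement identity, so I would prove it by partitioning the index set $\{1,2,\ldots,N\}$ according to the value of $k_p$ and then collecting equal terms.

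First, for each fixed $j$ with $1\le j\le l$, define the set
\[
P_j=\{p:\,1\le p\le N,\,k_p=j\}.
\]
By the very definition \eqref{dnj}, $n_j=\#P_j$. Since every index $p\in\{1,\ldots,N\}$ satisfies $k_p\in\{1,\ldots,l\}$ and this value is unique, the sets $P_1,\ldots,P_l$ form a (possibly partial, possibly empty-membered) disjoint partition of $\{1,\ldots,N\}$. Consequently
\[
\sum_{j=1}^{l}n_j=N.
\]

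Next I would rewrite the sum in the numerator of \eqref{rlc} by grouping according to this partition:
\[
\lambda_{k_1}+\cdots+\lambda_{k_N}=\sum_{p=1}^{N}\lambda_{k_p}
=\sum_{j=1}^{l}\sum_{p\in P_j}\lambda_{k_p}
=\sum_{j=1}^{l}n_j\lambda_j,
\]
where in the last step I use that for $p\in P_j$ one has $k_p=j$ and hence $\lambda_{k_p}=\lambda_j$, and there are exactly $n_j$ such terms. Dividing by $N$ gives \eqref{rlc}.

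Finally, the conditions \eqref{CC} are immediate: $\xi_j=n_j/N\ge 0$ because $n_j$ is the cardinality of a finite set, and $\sum_{j=1}^l\xi_j=\tfrac{1}{N}\sum_{j=1}^l n_j=\tfrac{N}{N}=1$ by the partition identity established above. There is no real obstacle here; the only thing to be careful about is that the eigenvalues $\lambda_1,\ldots,\lambda_l$ are taken without multiplicity (as stipulated in Section~\ref{sec1}), so that the grouping by the value of $k_p$ is unambiguous and the coefficients $n_j/N$ are well defined.
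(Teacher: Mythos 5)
Your proof is correct and is simply a careful spelling-out of the counting argument that the paper deems "evident" and does not write out. There is nothing to add or correct.
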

\begin{proof}The lemma is evident.
\end{proof}
The linear combination \(\xi_1\lambda_1+\xi_2\lambda_2+\,\cdots\,\xi_l\lambda_l\)
which appears on the right hand side of \eqref{rlc} is a \emph{convex} linear combination
of numbers \(\lambda_1,\lambda_2,\,\ldots\,,\lambda_l\). However this linear combination
is a very special convex linear combination. Its coefficients \(\xi_1,\,\ldots\,\xi_l\)
are numbers of the form \(\xi_j=\frac{n_j}{N}\), where \(n_j\) are non-negative integers.
\begin{defn}\label{dNch}
 Let \(\lambda_1,\,\ldots\,,\lambda_l\)  be real numbers, \(N\) be a positive
integer. \emph{The \(N\)-convex hull of the set} \(\{\lambda_1,\,\ldots\,,\lambda_l\}\)
 is the set \(\xi_1\lambda_1+\xi_2\lambda_2+\,\cdots\,\xi_l\lambda_l\) of all convex linear combinations with coefficients of the form \(\xi_j=\frac{n_j}{N}\),
 where \(n_j\) are non-negative integers. (Since the considered linear combinations are
 convex, the equality \(n_1+n_2+\cdots+n_l=N\) must hold.)
 \end{defn}

 In what follows, the numbers \(\lambda_1,\,\ldots\,\lambda_l\) form the spectrum
 \(\sigma(A)\) of the matrix \(A\).
 The \(N\)-convex hull of
 the spectrum \(\sigma(A)\) is denoted by \(ch_N(\sigma(A))\).
 The convex hull of the spectrum \(\sigma(A)\) is denoted by \(ch(\sigma(A))\).
 \begin{rem}
 \label{con}
 It is clear that the convex hull \(ch(\sigma(A))\) is the closed interval
 \([\lambda_{\textup{min}},\lambda_{\textup{max}}]\), where
 \(\lambda_{\textup{min}}=\min\limits_{1\leq{}j\leq{}l}\lambda_j\),
 \(\lambda_{\textup{max}}=\max\limits_{1\leq{}j\leq{}l}\lambda_j\).
 It is also clear, that
 \begin{equation}
 \label{inc}
 ch_N(\sigma(A))\subset {}ch(\sigma(A)),\quad \forall\,N.
 \end{equation}
 The union \(\bigcup\limits_{N}ch_N(\sigma(A))\)
 of the sets \( ch_N(\sigma(A)\) is dense in the set
\(ch(\sigma(A))\).
 \end{rem}

 The numbers \(\tfrac{\lambda_{k_1}+\,\ldots\,+\lambda_{k_N}}{N}\) which appear in
 the exponents of the exponentials in \eqref{EEL} belong
 to the set \(ch_N(\sigma(A))\). Collecting similar terms, we rewrite \eqref{EEL} in the
 form
\begin{equation}
\label{EEa}
L_N(t)=\sum\limits_{\lambda\in{}ch_N(\sigma(A))}e^{t\lambda}\,M_N(\{\lambda\}),
\end{equation}
where
\begin{equation}
\label{EEb}
M_N(\{\lambda\})=\sum\limits_{k_1,\ldots,k_N}M_{k_1,\ldots,k_N},
\end{equation}
the matrices \(M_{k_1,\ldots,k_N}\) are defined by \eqref{Pr}.
For each \(\lambda\in{}ch_N(\sigma(A))\), the sum in \eqref{EEb}
is extended over all those  \(k_1,\ldots,k_N\) for which
\(\tfrac{\lambda_{k_1}+\,\ldots\,+\lambda_{k_N}}{N}=\lambda\).

We interpret the equality \eqref{EEa} as the representation   of  the approximant \(L_N(t)\) in the form of the
bilateral Laplace transform  of a matrix valued measure \(M_N(d\lambda)\):
\begin{equation}
\label{LRN}
L_N(t)=\int\limits_{\lambda\in{}ch_N(\sigma(A))} e^{t\lambda}\,M_N(d\lambda).
\end{equation}
The measure \(M_N(d\lambda)\) is discrete and is supported on the finite set
\(ch_N(\sigma(A))\).
The point \(\{\lambda\}\in{}ch_N(\sigma(A))\) carries the "atom" \(M_N(\{\lambda\})\).

According to \eqref{LPF}
\begin{equation}
\label{LRI}
e^{tA+B}=\lim\limits_{N\to\infty}\int e^{t\lambda}\,M_N(d\lambda),\quad \forall t\in\mathbb{C}.
\end{equation}
\section{The norm in the set \(\mathfrak{M}_n\).}
\label{sec3}
 We have to prove that the sequence \(\big\{M_{N}(d\lambda)\big\}_{1\leq{}N<\infty}\) of matrix measures is weakly convergent. To prove this, we have to bound the total variations of these measures from above.

To express such bound, we have to provide the set \(\mathfrak{M}_n\) with some norm.
We provide the set  \(\mathfrak{M}_{n}\) with the usual \emph{operator norm}. Let
\(S=(s_{pq})_1^n\in\mathfrak{M}_{n}\). The norm \(\|S\|\) is defined as follows:
\begin{equation}
\label{DON}
\|S\|\stackrel{\textup{\tiny{def}}}{=}
\max\limits_{\xi,\eta}\frac{\Big|\sum\limits_{1\leq p,q\leq n}s_{pq}\xi_q\eta_p\Big|}
{\sqrt{|\xi_1|^2+\,\cdots\,+|\xi_n|^2}\sqrt{|\eta_1|^2+\,\cdots\,+|\eta_n|^2}},
\end{equation}
where \(\max\) is taken over all complex numbers \(\xi_1,\,\ldots\,,\xi_n\) and \(\eta_1,\,\ldots\,,\eta_n\)
\begin{lem}
\label{maj1}
Let
\(S=(s_{pq})_1^n\in\mathfrak{M}_{n}\). Then the inequality
\begin{equation}
\label{In1}
\|S\|\leq\sum\limits_{1\leq{}p,q\leq n}|s_{pq}|
\end{equation}
holds.
\end{lem}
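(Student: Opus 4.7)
The plan is to estimate the bilinear form that appears in the numerator of the definition \eqref{DON} directly using the triangle inequality for absolute values, and then bound the resulting expression by the product of Euclidean norms of $\xi$ and $\eta$.

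More concretely, first I would fix arbitrary complex vectors $\xi=(\xi_1,\ldots,\xi_n)$ and $\eta=(\eta_1,\ldots,\eta_n)$ with $\xi\neq 0$ and $\eta\neq 0$, and apply the triangle inequality to the bilinear form in the numerator of \eqref{DON}:
\begin{equation*}
\Big|\sum_{1\leq p,q\leq n}s_{pq}\xi_q\eta_p\Big|\leq\sum_{1\leq p,q\leq n}|s_{pq}|\,|\xi_q|\,|\eta_p|.
\end{equation*}
Next, I would use the trivial coordinatewise bounds $|\xi_q|\leq\sqrt{|\xi_1|^2+\cdots+|\xi_n|^2}$ and $|\eta_p|\leq\sqrt{|\eta_1|^2+\cdots+|\eta_n|^2}$, valid for every $p,q$, to factor out the two Euclidean norms:
\begin{equation*}
\sum_{1\leq p,q\leq n}|s_{pq}|\,|\xi_q|\,|\eta_p|\leq\Big(\sum_{1\leq p,q\leq n}|s_{pq}|\Big)\sqrt{|\xi_1|^2+\cdots+|\xi_n|^2}\,\sqrt{|\eta_1|^2+\cdots+|\eta_n|^2}.
\end{equation*}

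Finally, I would divide by the product of the Euclidean norms and take the maximum over all admissible $\xi$ and $\eta$; by the definition \eqref{DON}, the left-hand side becomes $\|S\|$, while the right-hand side is exactly $\sum_{1\leq p,q\leq n}|s_{pq}|$, which yields \eqref{In1}. No step here poses a genuine obstacle: the argument is just two applications of the triangle inequality combined with the elementary bound $|\xi_q|\leq\|\xi\|_2$; the only point worth mentioning is that the coordinatewise estimate is enough and one does not need the sharper Cauchy--Schwarz inequality.
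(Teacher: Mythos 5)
Your proof is correct and follows essentially the same route as the paper: apply the triangle inequality to the bilinear form in the definition of the operator norm, then use the trivial coordinatewise bound $|\xi_q|\,|\eta_p|\le \|\xi\|_2\|\eta\|_2$ to factor out the two Euclidean norms. The paper phrases the second step as bounding $\max_{p,q}|\xi_q\eta_p|$ rather than bounding each factor separately, but this is the same elementary estimate.
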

\begin{proof} The inequality \eqref{In1} is a direct consequence of the inequality
\begin{equation*}
\Big|\sum\limits_{1\leq p,q\leq n}s_{pq}\xi_q\eta_p\Big|\leq
\Big(\sum\limits_{1\leq{}p,q\leq n}|s_{pq}|\Big)\cdot
\max\limits_{1\leq{}p,q\leq{}n}\big|\xi_q\eta_p\big|
\end{equation*}
and of the inequality
\begin{equation*}
\max\limits_{1\leq{}p,q\leq{}n}\big|\xi_q\eta_p\big|\leq{}\sqrt{|\xi_1|^2+\,\cdots\,+|\xi_n|^2}
\sqrt{|\eta_1|^2+\,\cdots\,+|\eta_n|^2}. \qedhere
\end{equation*}
\end{proof}
\begin{lem}
\label{maj2}
Let
\(S=(s_{pq})_1^n\in\mathfrak{M}_{n}^{+}\). Then the inequality
\begin{equation}
\label{In2}
\sum\limits_{1\leq{}p,q\leq n}s_{pq}\leq{}n\cdot\|S\|
\end{equation}
holds.
\end{lem}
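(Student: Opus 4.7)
The plan is to exploit the definition of $\|S\|$ by making a single judicious choice of the vectors $\xi$ and $\eta$, namely the ``flat'' unit vectors whose coordinates are all equal to $1/\sqrt{n}$.

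More concretely, I would set $\xi_q = 1/\sqrt{n}$ for $1 \leq q \leq n$ and $\eta_p = 1/\sqrt{n}$ for $1 \leq p \leq n$. Then the denominator $\sqrt{|\xi_1|^2+\cdots+|\xi_n|^2}\,\sqrt{|\eta_1|^2+\cdots+|\eta_n|^2}$ in \eqref{DON} equals $1$, and the numerator becomes
\begin{equation*}
\Big|\sum_{1\leq p,q\leq n} s_{pq}\xi_q\eta_p\Big| = \Big|\frac{1}{n}\sum_{1\leq p,q\leq n} s_{pq}\Big|.
\end{equation*}
Here the crucial point, which is the only place where the assumption $S\in\mathfrak{M}_n^+$ is used, is that every $s_{pq}$ is a non-negative real number, so the sum inside the absolute-value bars is already non-negative and the absolute-value signs may be dropped. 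Therefore this specific choice of $(\xi,\eta)$ contributes the value $\tfrac{1}{n}\sum_{p,q} s_{pq}$ to the maximum in \eqref{DON}.

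Since $\|S\|$ is the maximum over all admissible $(\xi,\eta)$, this particular value is a lower bound for $\|S\|$, i.e. $\tfrac{1}{n}\sum_{p,q} s_{pq} \leq \|S\|$, which is precisely \eqref{In2} after multiplying by $n$. There is no real obstacle here; the only point that deserves emphasis is the role of the positivity hypothesis, which ensures that no cancellation occurs inside the absolute value and prevents the test vector from producing a sum strictly smaller than $\sum_{p,q} s_{pq}/n$. Without $S\in\mathfrak{M}_n^+$ the inequality can fail, as one sees already from a diagonal matrix with entries $\pm1$.
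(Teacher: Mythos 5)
Your proof is correct and is essentially the same as the paper's: the paper tests the Rayleigh-type quotient in \eqref{DON} with the all-ones vectors $\xi=\eta=(1,\ldots,1)$, which by scale-invariance is the same choice as your normalized flat vectors $1/\sqrt{n}$. Both arguments rest on the same observation that non-negativity of the entries lets one drop the absolute value.
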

\begin{proof} The ratio \(\dfrac{\sum\limits_{1\leq{}p,q\leq n}s_{pq}}{n}\)
can be    considered as the ratio
\begin{equation*}
\frac{\sum\limits_{1\leq p,q\leq n}s_{pq}\xi_q\eta_p}
{\sqrt{|\xi_1|^2+\,\cdots\,+|\xi_n|^2}\sqrt{|\eta_1|^2+\,\cdots\,+|\eta_n|^2}}
\end{equation*}
with \(\xi_1=1,\,\ldots\,,\xi_n=1\) and \(\eta_1=1,\,\ldots\,,\eta_n=1\).
\end{proof}
The inequality expressed by following Lemma can be considered as \emph{an inverse
triangle inequality.} It holds for matrices with   \emph{non-negative} entries.
The total number of summands can be arbitrary large.
\begin{lem}
\label{ITI}
Let \(S_r\in\mathfrak{M}_n^{+},\,r=1,2,\,\ldots\,,m\). Then the inequality
\begin{equation}
\label{iti}
\sum\limits_{1\leq r\leq{}m}\|S_r\|\leq{}n\cdot\|\!\!\!\sum\limits_{1\leq r\leq{}m}S_r\|
\end{equation}
holds.
\end{lem}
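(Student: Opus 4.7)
The plan is to combine the two preceding auxiliary lemmas, \ref{maj1} and \ref{maj2}, chaining them through the entrywise sum of the matrices. The key point that makes the inequality possible is the non-negativity of all entries: it lets me replace $|(s_r)_{pq}|$ by $(s_r)_{pq}$, interchange summation over $r$ with summation over $(p,q)$ without any cancellation, and conclude that $\sum_r S_r$ itself lies in $\mathfrak{M}_n^+$, which is precisely the hypothesis needed to apply Lemma \ref{maj2}.

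More concretely, the first step is: for each fixed $r$, apply Lemma \ref{maj1} to get
\[
\|S_r\| \;\leq\; \sum_{1\leq p,q\leq n}|(s_r)_{pq}| \;=\; \sum_{1\leq p,q\leq n}(s_r)_{pq},
\]
where the equality uses $S_r\in\mathfrak{M}_n^+$. The second step is to sum this over $r=1,\ldots,m$ and exchange the order of summation:
\[
\sum_{1\leq r\leq m}\|S_r\| \;\leq\; \sum_{1\leq p,q\leq n}\Bigl(\,\sum_{1\leq r\leq m}(s_r)_{pq}\Bigr) \;=\; \sum_{1\leq p,q\leq n}\Bigl(\sum_{1\leq r\leq m}S_r\Bigr)_{\!pq}.
\]
The third step is to observe that $\sum_{r}S_r\in\mathfrak{M}_n^+$, so Lemma \ref{maj2} applies and yields
\[
\sum_{1\leq p,q\leq n}\Bigl(\sum_{1\leq r\leq m}S_r\Bigr)_{\!pq} \;\leq\; n\cdot\Bigl\|\sum_{1\leq r\leq m}S_r\Bigr\|.
\]
Concatenating the two inequalities gives \eqref{iti}.

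There is essentially no obstacle here; the whole content is the non-negativity of the entries, which is what prevents the triangle inequality from going in its usual direction and instead permits the reverse bound with a dimensional factor $n$. The factor $n$ is exactly the price paid in Lemma \ref{maj2} for recovering the full entrywise sum from the operator norm, and it is independent of $m$, which is precisely the feature needed for the later application to bound total variations of the measures $M_N(d\lambda)$ uniformly in $N$.
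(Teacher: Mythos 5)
Your proof is correct and is exactly the argument the paper intends: the paper's proof consists of the single sentence that the lemma is a direct consequence of Lemmas \ref{maj1} and \ref{maj2}, and you have merely spelled out that chain (apply \ref{maj1} termwise, use non-negativity to drop absolute values, interchange sums, then apply \ref{maj2} to the sum, which again lies in \(\mathfrak{M}_n^{+}\)).
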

\begin{proof}
Lemma \ref{ITI} is a direct consequence of Lemmas \ref{maj1} and \ref{maj2}.
\end{proof}
{\ }\\
The following technical result is used later in Section \ref{BTV}.

\begin{lem}
\label{TeL}
Let the following objects be given:
\begin{enumerate}
\item Matrices \(F_j\in\mathfrak{M}_n^{+},\, j=1,\,\ldots\,l,\) which satisfy the
condition
\begin{equation}
\label{Mcc}
\sum\limits_{1\leq j\leq l}F_j=I_n;
\end{equation}
\item
A matrix \(R\in\mathfrak{M}_n^{+}\) and a number \(N\in\mathbb{N}\).
\end{enumerate}
Then the inequality
\begin{equation}
\label{FMI}
\sum\limits_{k_1,k_2,\ldots,k_N}
\big\|F_{k_1}e^{R/N}F_{k_2}e^{R/N}\,\ldots\,F_{k_N}e^{R/N}\big\|\leq
n\cdot\big\|e^{R}\big\|
\end{equation}
holds. The summation in \eqref{FMI} is extended over all integers
\(k_1,k_2,\ldots,k_N\) which satisfy the conditions\footnote{
So the sum in \eqref{FMI} contains \(l^N\) summands.
}
\(1\leq{}k_1\leq{}l,\,1\leq{}k_2\leq~l, \, \ldots\,,1\leq{}k_N\leq{}l.\)
\end{lem}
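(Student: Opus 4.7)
The plan is to combine the inverse triangle inequality (Lemma \ref{ITI}) with a telescoping argument that uses the partition-of-unity hypothesis \(\sum_j F_j = I_n\).

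First I would verify that every summand on the left of \eqref{FMI} is a matrix with non-negative entries, so that Lemma \ref{ITI} applies to the whole sum at once. The matrices \(F_j\) are in \(\mathfrak{M}_n^+\) by assumption; and since \(R\in\mathfrak{M}_n^+\), all the powers \(R^k\) have non-negative entries, so the series \(e^{R/N}=\sum_{k\geq0}(R/N)^k/k!\) shows that \(e^{R/N}\in\mathfrak{M}_n^+\). Products of matrices in \(\mathfrak{M}_n^+\) are again in \(\mathfrak{M}_n^+\), so each term \(F_{k_1}e^{R/N}F_{k_2}e^{R/N}\cdots F_{k_N}e^{R/N}\) is in \(\mathfrak{M}_n^+\).

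Next I would invoke Lemma \ref{ITI} with \(m=l^N\) to obtain
\begin{equation*}
\sum_{k_1,\ldots,k_N}\big\|F_{k_1}e^{R/N}F_{k_2}e^{R/N}\cdots F_{k_N}e^{R/N}\big\|\leq n\cdot\Big\|\!\!\sum_{k_1,\ldots,k_N}F_{k_1}e^{R/N}F_{k_2}e^{R/N}\cdots F_{k_N}e^{R/N}\Big\|.
\end{equation*}
It then remains to evaluate the matrix sum inside the norm on the right. Since each summation index \(k_p\) appears only in the single factor \(F_{k_p}\), the multiple sum factors as a product of independent sums, yielding
\begin{equation*}
\sum_{k_1,\ldots,k_N}F_{k_1}e^{R/N}\cdots F_{k_N}e^{R/N}=\Big(\sum_{k_1}F_{k_1}\Big)e^{R/N}\Big(\sum_{k_2}F_{k_2}\Big)e^{R/N}\cdots\Big(\sum_{k_N}F_{k_N}\Big)e^{R/N}.
\end{equation*}
By hypothesis \eqref{Mcc} each parenthesised sum equals \(I_n\), so the product collapses to \((e^{R/N})^N=e^{R}\). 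Substituting back yields precisely \eqref{FMI}.

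There is no real obstacle here: the essential ingredients are already in place. The only point that needs a moment's attention is the positivity of \(e^{R/N}\) (which justifies applying Lemma \ref{ITI} to products rather than to each \(F_j\) separately); everything else is a straightforward reorganisation of the sum exploiting that the indices \(k_p\) are independent.
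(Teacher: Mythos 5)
Your proof is correct and follows essentially the same route as the paper: apply the inverse triangle inequality (Lemma \ref{ITI}) to pull the sum inside the norm, then collapse the resulting multiple sum via the partition-of-unity hypothesis \eqref{Mcc} to obtain \(e^R\). Your explicit verification that each summand lies in \(\mathfrak{M}_n^{+}\) and the spelled-out factorisation of the multiple sum are details the paper leaves implicit, but the argument is the same.
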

\begin{proof}
According to Lemma \ref{ITI}, the inequality
\begin{multline*}
\sum\limits_{k_1,k_2,\ldots,k_N}
\big\|F_{k_1}e^{R/N}F_{k_2}e^{R/N}\,\ldots\,F_{k_N}e^{R/N}\big\|\leq\\
n\cdot\Big\|\sum\limits_{k_1,k_2,\ldots,k_N}
F_{k_1}e^{R/N}F_{k_2}e^{R/N}\,\ldots\,F_{k_N}e^{R/N}\,\Big\|
\end{multline*}
holds. From the condition \eqref{Mcc} it follow that
\begin{equation*}
\sum\limits_{k_1,k_2,\ldots,k_N}
F_{k_1}e^{R/N}F_{k_2}e^{R/N}\,\ldots\,F_{k_N}e^{R/N}=
e^{R/N}\cdot{}e^{R/N}\cdot\,\cdots\,\cdot{}e^{R/N}=e^{R}.
\end{equation*}
\end{proof}
\begin{rem} If \(F_j\in\mathfrak{M_n^{+}},\,\forall j=1,\,\ldots\,l,\) and the equality \eqref{Mcc} holds, then \(F_j\in\mathfrak{D_n},\,\forall\, j=1,\,\ldots\,l\).
The diagonal entries of each of the matrices \(F_j\) belong to the interval
\([0,1]\).
\end{rem}
\section{An expression for the total variation of the measure \(\boldsymbol{M_N(d\lambda)}\).}
Since the measure \(M_N(d\lambda)\) is discrete and its support is a finite set
\(ch_N(\sigma(A))\), the total variation of this measure is expressed by the sum
\[\sum\limits_{\lambda\in{}ch_N(\sigma(A))}\|{}M_N(\{\lambda\})\|.
\]
To prove that the family of measures \(\big\{M_{N}(d\lambda)\big\}_{1\leq{}N<\infty}\)
is weakly convergent, we have to obtain an estimate of the form
\begin{equation}
\label{EV}
\sum\limits_{\lambda\in{}ch_N(\sigma(A))}\|{}M_N(\{\lambda\})\|\leq C,
\quad \forall\, N,
\end{equation}
where \(C<\infty\) does not depends on \(N\).
\begin{lem}
\label{reg}
Let \(M_N(\{\lambda\})\), \(\lambda\in{}ch_N(\sigma(A))\), be the matrices which appear in the representation
\eqref{EEa}-\eqref{EEb} of the \(N\)-approximant \(L_{N}(t)\). Then the inequality
\begin{equation}
\label{IE}
\sum\limits_{\lambda\in{}ch_N(\sigma(A))}\|{}M_N(\{\lambda\})\|\leq{}
\sum\limits_{k_1,\ldots,k_N}\| M_{k_1,\ldots,k_N} \|{},
\end{equation}
holds, where \(M_{k_1,\ldots,k_N}\) are the same as in \eqref{Pr}. On the right hand side of \eqref{IE}, the summation is extended over all integers \(k_1,\ldots,k_N\) satisfying
the conditions \(1\leq{}k_1\leq{}l,\,\ldots\,,1\leq{}k_N\leq{}l\).
\end{lem}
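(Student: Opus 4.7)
The proof is essentially an application of the triangle inequality combined with a partitioning argument, so no major technical obstacle is expected. The plan is to unpack the definition of \(M_N(\{\lambda\})\) from \eqref{EEb}, apply the ordinary triangle inequality at each atom, and then interchange the order of summation.

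First I would fix \(\lambda \in ch_N(\sigma(A))\) and recall from \eqref{EEb} that
\[M_N(\{\lambda\}) = \sum_{\substack{k_1,\ldots,k_N \\ \frac{\lambda_{k_1}+\cdots+\lambda_{k_N}}{N}=\lambda}} M_{k_1,\ldots,k_N}.\]
By the triangle inequality for the operator norm,
\[\|M_N(\{\lambda\})\| \leq \sum_{\substack{k_1,\ldots,k_N \\ \frac{\lambda_{k_1}+\cdots+\lambda_{k_N}}{N}=\lambda}} \|M_{k_1,\ldots,k_N}\|.\]

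Next I would sum this inequality over \(\lambda \in ch_N(\sigma(A))\). The key observation is that the map
\[(k_1,\ldots,k_N) \longmapsto \frac{\lambda_{k_1}+\cdots+\lambda_{k_N}}{N}\]
partitions the set of all tuples \((k_1,\ldots,k_N)\) with \(1\leq k_p \leq l\) into disjoint fibers, one per value \(\lambda \in ch_N(\sigma(A))\). Consequently, summing the atomic bounds over \(\lambda\) reassembles an unconstrained sum over all tuples, yielding the stated inequality \eqref{IE}.

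The only step that requires any care is the bookkeeping that the fibers of the partition exhaust all \(l^N\) tuples and do so without overlap; this is immediate from the definition of \(ch_N(\sigma(A))\) (Definition \ref{dNch}) together with formula \eqref{rlc}. Since nothing else is needed beyond the triangle inequality and this combinatorial identity, I would not expect any real obstacle in executing the proof.
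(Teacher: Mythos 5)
Your proposal is correct and follows exactly the same route as the paper: apply the triangle inequality to the atomic sum \eqref{EEb} and then observe that the constrained sums over tuples with a fixed value of \(\tfrac{\lambda_{k_1}+\cdots+\lambda_{k_N}}{N}\) partition the full set of \(l^N\) tuples, so adding over \(\lambda\in ch_N(\sigma(A))\) reassembles the unconstrained right-hand side. Your phrasing in terms of fibers of the averaging map is just a cleaner way of stating the paper's ``regrouping summands'' step.
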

\begin{proof}
Applying the triangle inequality to \eqref{EEb}, we obtain the inequality
\begin{equation}
\label{IEe}
\|{}M_N(\{\lambda\})\|\leq{}
\sum\limits_{k_1,\ldots,k_N}\| M_{k_1,\ldots,k_N} \|,\qquad \forall\,\lambda\in{}Nch(\sigma(A)).
\end{equation}
 On the right hand side of \eqref{IEe}, the summation is extended over all those integers       \(k_1,\,\ldots\,,k_N\) for which
\(\tfrac{\lambda_{k_1}+\,\ldots\,+\lambda_{k_N}}{N}=\lambda\). Adding the inequalities \eqref{IEe} over all \(\lambda\in{}ch_N(\sigma(A))\), we come to the inequality
\begin{equation}
\label{IEf}
\sum\limits_{\lambda\in{}ch_N(\sigma(A))}\|{}M_N(\{\lambda\})\|\leq{}
\sum\limits_{\lambda\in{}ch_N(\sigma(A))}\Big(\sum\limits_{k_1,\ldots,k_N}\| M_{k_1,\ldots,k_N} \|\Big).
\end{equation}
Regrouping summands in the right hand side of \eqref{IEf}, we come to the inequality
\eqref{IE}.
\end{proof}

\section{The subordination relation.}
\begin{defn}
Let \(M=(m_{pq})_1^n\in\mathfrak{M}_n\), \(S=(s_{pq})_1^n\in\mathfrak{M}_n^{+}\). We say that \emph{the matrix \(M\) is subordinated to the matrix \(S\)} and use \emph{the notation
\(M\preceq{}S\) for the subordination relation} if  the inequalities
\begin{equation}
\label{So}
|m_{pq}|\leq{}s_{pq},\quad 1\leq{}p,q\leq n,
\end{equation}
hold for the entries \(m_{pq}\), \(s_{pq}\) of the matrices \(M,S\), respectively.
\end{defn}
\begin{lem}
\label{NEs}
We assume that \(M\in\mathfrak{M}_n, S\in\mathfrak{M}_n^{+}\), and \(M\preceq{}S\).
Then
\begin{equation}
\label{InE}
\|M\|\leq{}\|S\|.
\end{equation}
\end{lem}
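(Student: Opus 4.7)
The plan is to argue directly from the variational definition \eqref{DON} of the operator norm. Fix complex vectors $\xi=(\xi_1,\ldots,\xi_n)$ and $\eta=(\eta_1,\ldots,\eta_n)$. By the ordinary triangle inequality for complex sums, and then by the hypothesis $|m_{pq}|\leq s_{pq}$, I would estimate
\[
\Big|\sum_{1\leq p,q\leq n} m_{pq}\xi_q\eta_p\Big|
\leq \sum_{1\leq p,q\leq n} |m_{pq}|\,|\xi_q|\,|\eta_p|
\leq \sum_{1\leq p,q\leq n} s_{pq}\,|\xi_q|\,|\eta_p|.
\]

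The next step exploits the fact that $S$ has non-negative entries, so the last sum is literally the bilinear form of $S$ evaluated on the vectors $\tilde\xi=(|\xi_1|,\ldots,|\xi_n|)$ and $\tilde\eta=(|\eta_1|,\ldots,|\eta_n|)$. Applying the definition \eqref{DON} to $S$ with these test vectors yields
\[
\sum_{1\leq p,q\leq n} s_{pq}\,|\xi_q|\,|\eta_p|
\leq \|S\|\cdot\sqrt{\,|\xi_1|^2+\cdots+|\xi_n|^2\,}\cdot\sqrt{\,|\eta_1|^2+\cdots+|\eta_n|^2\,},
\]
because taking componentwise absolute values preserves the Euclidean norm. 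Combining the two displayed inequalities and dividing by the product of the two square roots gives
\[
\frac{\big|\sum_{p,q} m_{pq}\xi_q\eta_p\big|}
{\sqrt{\sum|\xi_q|^2}\,\sqrt{\sum|\eta_p|^2}}\leq \|S\|.
\]
Taking the maximum over all nonzero $\xi,\eta$ on the left and invoking \eqref{DON} for $M$ yields \eqref{InE}.

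There is no serious obstacle here: the only subtlety is the observation that replacing the test vectors by their entrywise moduli does not change the denominator in \eqref{DON}, which is exactly what allows one to apply the norm bound for $S$ after passing to absolute values. This step is what makes the entrywise domination $M\preceq S$ translate into a domination of operator norms, and it relies on the non-negativity assumption $S\in\mathfrak{M}_n^{+}$.
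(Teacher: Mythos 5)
Your proof is correct and coincides with the paper's own argument: both pass from the triangle inequality to the entrywise domination \(|m_{pq}|\le s_{pq}\), recognize the resulting sum as the bilinear form of \(S\) on the modulus vectors, and apply the definition \eqref{DON} of \(\|S\|\), using that taking entrywise absolute values leaves the Euclidean norms unchanged. No discrepancy.
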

\begin{proof} Let \(m_{pq},s_{pq}\) be the entries of the matrices \(M\) and \(S\), and \(\xi_1,\,\ldots\,,\xi_n\), \(\eta_1,\,\ldots\,,\eta_n\) be arbitrary complex numbers. Then the inequality
\begin{multline*}
\big|\sum\limits_{1\leq p,q\leq n}m_{pq}\xi_p\eta_q\big|\leq
\sum\limits_{1\leq p,q\leq n}|m_{pq}||\xi_p||\eta_q|\leq
\sum\limits_{1\leq p,q\leq n}s_{pq}|\xi_p||\eta_q|\\
\leq{}\|S\|\sqrt{|\xi_1|^2+\,\cdots\,+|\xi_n|^2}
\sqrt{|\eta_1|^2+\,\cdots\,+|\eta_n|^2}.
\end{multline*}
holds.
\end{proof}
\begin{defn}
\label{USN}
Given a matrix \(B\in\mathfrak{M}_n\), we associate the matrix \(R(B)\)
with \(B\). By definition,
\begin{equation}
\label{SoR}
R(B)=(r_{pq})_{1}^{n},\qquad  r_{pq}=\|B\|,\quad 1\leq{}p,q\leq n,
\end{equation}
\end{defn}
\begin{lem}
\label{Sur}
The matrix \(B\) is subordinated to the
matrix \(R(B)\).
\end{lem}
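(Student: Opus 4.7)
The plan is to unpack the definitions: subordination $B \preceq R(B)$ means $|b_{pq}| \leq r_{pq} = \|B\|$ for every $1 \leq p, q \leq n$, so the lemma reduces to the standard fact that every entry of a matrix is bounded in absolute value by its operator norm.

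To extract this bound from the definition \eqref{DON}, I would choose test vectors that pick out a single entry. Specifically, fix indices $p$ and $q$, and in the formula for $\|B\|$ take $\xi_i = \delta_{iq}$ and $\eta_j = \delta_{jp}$ (the standard basis vectors). Then the double sum $\sum_{i,j} b_{ji}\xi_i\eta_j$ collapses to $b_{pq}$, while both normalizing square roots equal $1$. Since $\|B\|$ is the supremum of such ratios over all choices of $\xi$ and $\eta$, this gives $|b_{pq}| \leq \|B\|$.

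Combining this with the definition of $R(B)$, the inequalities $|b_{pq}| \leq \|B\| = r_{pq}$ hold for all $1 \leq p, q \leq n$, which is exactly the subordination condition \eqref{So}. Note that $R(B) \in \mathfrak{M}_n^{+}$ since each entry $\|B\|$ is a non-negative real number, so the subordination relation makes sense.

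There is no real obstacle here; the lemma is essentially a one-line consequence of \eqref{DON} applied to the standard basis. The only minor care needed is matching the index conventions in \eqref{DON} (where the form is $\sum s_{pq}\xi_q\eta_p$) to ensure that the test vectors are chosen so as to isolate the desired entry $b_{pq}$.
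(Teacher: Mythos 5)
Your proof is correct and follows the same route as the paper: both reduce the lemma to the entry bound $|b_{pq}| \leq \|B\| = r_{pq}$, which is exactly the subordination condition. The paper simply asserts this inequality as known, while you additionally supply the short derivation from the definition \eqref{DON} by testing against standard basis vectors — a detail the paper leaves implicit.
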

\begin{proof} The entry \(b_{pq}\) of the matrix \(B\) satisfies the inequality \(|b_{pq}|\leq\|B\|=r_{pq},\,1\leq p,q\leq{}n\).
\end{proof}
\begin{lem}{\ }
\label{Au}
\begin{enumerate}
\item
The matrix \(R(B)\) is a Hermitian matrix of rank one.
\item The norms of the matrix \(R(B)\) and its exponential \(e^{R(B)}\) are
\begin{equation}
\label{NeB}
\|R(B)\|=n\|B\|,\quad \|e^{R(B)}\|=e^{n\|B\|}.
\end{equation}
\end{enumerate}
\end{lem}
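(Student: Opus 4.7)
The plan is to represent $R(B)$ as a scaled rank-one outer product and then read off both assertions from its spectral decomposition. Let $\mathbf{e}\in\mathbb{C}^{n}$ denote the column vector all of whose entries equal $1$, and write $\mathbf{e}^{\ast}$ for its conjugate transpose. Then directly from \eqref{SoR} one has the factorization
\[
R(B)=\|B\|\,\mathbf{e}\mathbf{e}^{\ast}.
\]
This formula makes part (1) immediate: the right-hand side is self-adjoint because $(\mathbf{e}\mathbf{e}^{\ast})^{\ast}=\mathbf{e}\mathbf{e}^{\ast}$ and $\|B\|$ is real, while the image of $\mathbf{e}\mathbf{e}^{\ast}$ is the one-dimensional span of $\mathbf{e}$ (so the rank is $1$ whenever $B\neq0$, and the rank-one statement is vacuous otherwise).

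For part (2), the plan is to compute the spectrum of $R(B)$ explicitly. From $R(B)\mathbf{e}=\|B\|\,\mathbf{e}(\mathbf{e}^{\ast}\mathbf{e})=n\|B\|\,\mathbf{e}$ one sees that $n\|B\|$ is an eigenvalue with eigenvector $\mathbf{e}$. Every vector orthogonal to $\mathbf{e}$ lies in the kernel of $\mathbf{e}^{\ast}$ and hence in the kernel of $R(B)$, so the remaining $n-1$ eigenvalues are all zero. Because $R(B)$ is Hermitian, its operator norm equals its spectral radius, which gives $\|R(B)\|=n\|B\|$, the first equality in \eqref{NeB}.

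Finally, I would apply the functional calculus of Section \ref{sec1} to the Hermitian matrix $R(B)$. By \eqref{f(A)} applied to $f(\lambda)=e^{\lambda}$, the matrix $e^{R(B)}$ has the same spectral projectors as $R(B)$, with eigenvalues $e^{n\|B\|}$ on the span of $\mathbf{e}$ and $e^{0}=1$ on its orthogonal complement. Since $e^{R(B)}$ is Hermitian and positive, its operator norm coincides with its largest eigenvalue, namely $e^{n\|B\|}$, proving the second equality in \eqref{NeB}. There is essentially no obstacle in this proof; the one point worth flagging is that identifying $\|R(B)\|$ and $\|e^{R(B)}\|$ with largest eigenvalues depends crucially on Hermiticity, so that step must be invoked before reading off the norms.
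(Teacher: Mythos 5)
Your proof is correct and follows essentially the same route as the paper, which simply observes that the only non-zero eigenvalue of $R(B)$ is $n\|B\|$; you have spelled out the rank-one factorization $R(B)=\|B\|\,\mathbf{e}\mathbf{e}^{\ast}$ and the resulting spectral decomposition explicitly, but the underlying idea is identical.
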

\begin{proof}
The only non-zero eigenvalue of the matrix \(R(B)\) is the number \(n\|B\|\).
\end{proof}

\begin{lem}
\label{sol}
Let \(\Psi_k\in\mathfrak{M}_n,\,\Phi_k\in\mathfrak{M}_n^{+},\,k=1,\,\ldots\,,m\).
Assume that for each \(k=1,\,\ldots\,,m\), the matrix \(\Psi_k\) is subordinated to the
matrix \(\Phi_k\):
\[\Psi_{k}\preceq\Phi_k,\quad k=1,\,\ldots\,,m.\]
Then the subordination relations
\begin{align*}
\Psi_1+\Psi_2+\,\cdots\,+\Psi_m &\preceq\Phi_1+\Phi_2+\,\cdots\,+\Phi_m,\\
\Psi_1\cdot\Psi_2\cdot\,\cdots\,\cdot\Psi_m &\preceq\Phi_1\cdot\Phi_2\cdot\,\cdots\,\cdot\Phi_m
\end{align*}
hold for the sum and the product of these matrices.
\begin{proof} The assertion of Lemma is a direct consequence of the definition of matrix addition and multiplication and of elementary properties of numerical inequalities.
\end{proof}
\end{lem}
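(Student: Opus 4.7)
The plan is to verify both subordination relations entrywise, using the definition of $\preceq$ together with the triangle inequality. The key simplification is that the majorants $\Phi_k$ lie in $\mathfrak{M}_n^{+}$, so their entries are already non-negative reals and no absolute values are needed on the right-hand side of the comparisons.

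For the additive claim, I would fix indices $1 \le p,q \le n$ and denote the entries of $\Psi_k$ and $\Phi_k$ by $(\psi_k)_{pq}$ and $(\phi_k)_{pq}$. The $(p,q)$-entry of the sum $\Psi_1 + \cdots + \Psi_m$ is $\sum_{k=1}^{m}(\psi_k)_{pq}$, and the triangle inequality for complex numbers gives
\[
\Big|\sum_{k=1}^{m}(\psi_k)_{pq}\Big| \le \sum_{k=1}^{m}|(\psi_k)_{pq}| \le \sum_{k=1}^{m}(\phi_k)_{pq},
\]
where the second inequality is the hypothesis $\Psi_k \preceq \Phi_k$ applied termwise. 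The right-hand side is precisely the $(p,q)$-entry of $\Phi_1 + \cdots + \Phi_m$, which is what is required.

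For the multiplicative claim, I would reduce to the case $m=2$ and then iterate. For $m=2$, the $(p,q)$-entry of $\Psi_1\Psi_2$ equals $\sum_{r=1}^{n}(\psi_1)_{pr}(\psi_2)_{rq}$, and
\[
\Big|\sum_{r=1}^{n}(\psi_1)_{pr}(\psi_2)_{rq}\Big| \le \sum_{r=1}^{n}|(\psi_1)_{pr}|\,|(\psi_2)_{rq}| \le \sum_{r=1}^{n}(\phi_1)_{pr}(\phi_2)_{rq},
\]
the last sum being the $(p,q)$-entry of $\Phi_1\Phi_2$. Here the non-negativity of the entries of $\Phi_1$ and $\Phi_2$ is used to multiply the two coordinate inequalities without sign issues. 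Once $m=2$ is settled, a straightforward induction on $m$, combined with the observation that $\Phi_1\cdots\Phi_{m-1}$ again lies in $\mathfrak{M}_n^{+}$ (since products of matrices with non-negative entries have non-negative entries), yields the general statement.

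There is no serious obstacle here: the only nontrivial input is that $\Phi_k\in\mathfrak{M}_n^{+}$, which guarantees that the termwise and entrywise inequalities propagate through the products without needing to take further absolute values. The argument is thus almost purely a bookkeeping exercise in the triangle inequality and the definition of matrix multiplication.
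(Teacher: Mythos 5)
Your argument is correct and is exactly the entrywise verification that the paper's one-line proof (``a direct consequence of the definition of matrix addition and multiplication and of elementary properties of numerical inequalities'') is invoking; you have simply written out the bookkeeping. No substantive difference in approach.
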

\begin{lem}
\label{SuE}
 Let \(X\in\mathfrak{M}_n^{+}\). Then
\(e^{X}\in\mathfrak{M}_n^{+}\).
If \(Y\in\mathfrak{M}_n\),
\(Y\preceq{}X\), then \(e^{Y}\preceq{}e^{X}\).
\end{lem}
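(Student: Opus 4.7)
The plan is to work directly from the power series expansion
\[
e^{X}=\sum_{k=0}^{\infty}\frac{X^{k}}{k!},\qquad e^{Y}=\sum_{k=0}^{\infty}\frac{Y^{k}}{k!},
\]
and to combine it with the product/sum permanence of subordination already established in Lemma \ref{sol}.

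For the first assertion, I would observe that the set \(\mathfrak{M}_n^{+}\) is closed under matrix addition, matrix multiplication, and multiplication by non-negative scalars, since each of these operations is defined by non-negative algebraic combinations of the entries. Consequently, if \(X\in\mathfrak{M}_n^{+}\) then each power \(X^{k}\in\mathfrak{M}_n^{+}\) (with the convention \(X^{0}=I_n\in\mathfrak{M}_n^{+}\)), so every partial sum \(\sum_{k=0}^{m}\tfrac{1}{k!}X^{k}\) lies in \(\mathfrak{M}_n^{+}\). Since \(\mathfrak{M}_n^{+}\) is closed under entrywise limits, passing to the limit \(m\to\infty\) gives \(e^{X}\in\mathfrak{M}_n^{+}\).

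For the second assertion, I would first note that the relation \(\preceq\) is stable under entrywise limits: if \(U_m\preceq V_m\) and \(U_m\to U\), \(V_m\to V\) entrywise, then \(U\preceq V\), because the entrywise inequalities \(|u_{pq}^{(m)}|\leq v_{pq}^{(m)}\) pass to the limit by continuity of the modulus. Starting from \(Y\preceq X\), an induction on \(k\) using the product part of Lemma \ref{sol} gives \(Y^{k}\preceq X^{k}\) for every \(k\geq 0\) (the case \(k=0\) being the trivial \(I_n\preceq I_n\)). Multiplying an entrywise inequality by the non-negative scalar \(1/k!\) preserves it, and the additive part of Lemma \ref{sol} then yields
\[
\sum_{k=0}^{m}\frac{Y^{k}}{k!}\ \preceq\ \sum_{k=0}^{m}\frac{X^{k}}{k!}\qquad\text{for every }m\in\mathbb{N}.
\]
Taking the entrywise limit \(m\to\infty\) on both sides and applying the limit stability of \(\preceq\) noted above yields \(e^{Y}\preceq e^{X}\), as required.

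There is no substantial obstacle here: the assertion is a routine consequence of the entrywise positivity of the coefficients \(1/k!\), the algebraic closure properties of \(\mathfrak{M}_n^{+}\), and Lemma \ref{sol}. The only point that merits explicit mention is the continuity statement allowing \(\preceq\) to pass through the limit of partial sums, which is why one should phrase the argument in terms of truncated series rather than the full series at the outset.
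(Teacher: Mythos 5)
Your proposal is correct and follows the same route as the paper: term-by-term subordination of the exponential power series via Lemma \ref{sol}. The paper's own proof is more terse---it applies Lemma \ref{sol} directly to the infinite sums without separately addressing the first assertion or the passage to the limit of partial sums---so your added care about the entrywise limit stability of \(\preceq\) and the closure of \(\mathfrak{M}_n^{+}\) is a welcome tightening of the same idea rather than a different argument.
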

\begin{proof}   According Lemma \ref{sol}, the subordination relations
\(\frac{1}{m!}Y^{m}\preceq{}\frac{1}{m!}X^{m}\) hold for every \(m=0,1,2,\,\ldots\).
Using Lemma \ref{sol} once more, we conclude that
\(\sum\limits_{0\leq m<\infty}\frac{1}{m!}Y^{m}\preceq\sum\limits_{0\leq m<\infty}\frac{1}{m!}X^{m}\).
\end{proof}

\section{A bound for the total variation of the measure \(\boldsymbol{M_N(d\lambda)}\).}
\label{BTV}
\begin{lem}
\label{CruE}
Let \(A\in\mathfrak{H}_n\), \(B\in\mathfrak{M}_n\), \(N\in\mathbb{N}\), and let the matrices
\(M_{k_1,\,\ldots\,,k_N}\) be defined according to \eqref{Pr}.

Then the inequality
\begin{equation}
\label{crue}
\sum\limits_{1\leq{}k_1,\ldots,k_N\leq{}l}\| M_{k_1,\ldots,k_N} \|\leq{}ne^{n\|B\|}
\end{equation}
holds.
\end{lem}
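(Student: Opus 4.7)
The strategy is to apply Lemma~\ref{TeL} with the spectral projectors of $A$ playing the role of the matrices $F_j$, after replacing $e^{B/N}$ by a matrix exponential with non-negative entries via the subordination machinery of Section~5. The immediate obstacle is that, in a general basis, the spectral projectors $E_{\lambda_j}$ of a Hermitian matrix are not entrywise non-negative, so Lemma~\ref{TeL} cannot be invoked directly on the products $M_{k_1,\ldots,k_N}$ as written.

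To circumvent this, I would first reduce to the case where $A$ is diagonal. Since $A\in\mathfrak{H}_n$, there exists a unitary $U$ with $U^*AU$ diagonal; set $P_j=U^*E_{\lambda_j}U$ and $\tilde B=U^*BU$. Each $P_j$ is then a diagonal $0$-$1$ matrix, hence lies in $\mathfrak{M}_n^+$, with $\sum_{j} P_j=I_n$, and $\|\tilde B\|=\|B\|$. By unitary invariance of the operator norm, $\|M_{k_1,\ldots,k_N}\|=\|P_{k_1}e^{\tilde B/N}P_{k_2}e^{\tilde B/N}\cdots P_{k_N}e^{\tilde B/N}\|$, so it suffices to bound the sum after these replacements.

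Next, introduce the dominating matrix $R=R(\tilde B)\in\mathfrak{M}_n^+$ from Definition~\ref{USN}. Lemma~\ref{Sur} gives $\tilde B\preceq R$, and Lemma~\ref{SuE} upgrades this to $e^{\tilde B/N}\preceq e^{R/N}$. Combining this with the trivial subordination $P_j\preceq P_j$ and applying Lemma~\ref{sol} for products yields
\[
P_{k_1}e^{\tilde B/N}P_{k_2}e^{\tilde B/N}\cdots P_{k_N}e^{\tilde B/N}\;\preceq\;P_{k_1}e^{R/N}P_{k_2}e^{R/N}\cdots P_{k_N}e^{R/N},
\]
whose right-hand side lies in $\mathfrak{M}_n^+$. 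Lemma~\ref{NEs} transfers this entrywise inequality to an operator-norm inequality for each tuple $k_1,\ldots,k_N$.

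Summing the resulting bounds over all $k_1,\ldots,k_N$ and applying Lemma~\ref{TeL} with $F_j:=P_j$ and the non-negative matrix $R$, the right-hand side is bounded by $n\|e^R\|$. By Lemma~\ref{Au}, $\|e^R\|=e^{n\|\tilde B\|}=e^{n\|B\|}$, which is exactly~\eqref{crue}. The main conceptual step is the initial reduction to diagonal $A$: Hermiticity is used precisely to place the spectral projectors into $\mathfrak{M}_n^+$ as diagonal $0$-$1$ matrices that sum to $I_n$, after which the remaining argument is a routine composition of the subordination lemmas with Lemma~\ref{TeL}.
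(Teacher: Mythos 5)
Your proof is correct and follows essentially the same route as the paper: reduction to the case of diagonal $A$ via unitary conjugation (to place the spectral projectors in $\mathfrak{M}_n^+$), followed by the subordination $e^{B/N}\preceq e^{R(B)/N}$ and an application of Lemma~\ref{TeL} together with Lemma~\ref{Au}. The only cosmetic difference is ordering: you perform the unitary reduction at the outset, whereas the paper first proves the diagonal case and removes the extra assumption at the end.
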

\begin{proof} {\ }\\
\textbf{1.} We impose the additional condition: the matrix \(A\) is diagonal. So
\begin{equation}
\label{AC}
A\in\mathfrak{H}_n\cap\mathfrak{D}_n.
\end{equation}
\emph{Then all spectral projectors \(E_{\lambda_j}\) are diagonal matrices}. Hence
\(E_{\lambda_j}\in\mathfrak{M}_n^{+}\), i.e.
\begin{equation}
\label{SpP}
E_{\lambda_j}\preceq{}E_{\lambda_j},\quad j=1,\,\ldots\,,l.
\end{equation}
Let the matrix \(R(B)\) be defined according to Definition \ref{USN}.
By Lemma \ref{Sur}, \(B\preceq{}R(B)\). By Lemma \ref{SuE},
\begin{equation}
\label{Sue}
e^{B/N}\preceq{}e^{R(B)/N}.
\end{equation}
By Lemma \ref{sol}, the subordination relation
\begin{equation*}
M_{k_1\ldots{}k_N}\preceq{}
E_{\lambda_{k_1}}e^{R(B)/N}E_{\lambda_{k_2}}e^{R(B)/N}\,\ldots\,
E_{\lambda_{k_N}}e^{R(B)/N}
\end{equation*}
is satisfied for every \(k_1,\,\ldots\,,k_N\). By Lemma \ref{NEs}, the inequality
\begin{equation*}
\|M_{k_1\ldots{}k_N}\|\leq\|E_{\lambda_{k_1}}e^{R(B)/N}E_{\lambda_{k_2}}e^{R(B)/N}\,\ldots\,
E_{\lambda_{k_N}}e^{R(B)/N} \|
\end{equation*}
holds. Adding the above inequalities, we obtain the inequalty
\begin{multline}
\label{rhs}
\sum\limits_{k_1,\ldots,k_N}\| M_{k_1,\ldots,k_N} \|\leq\\
\sum\limits_{k_1,\ldots,k_N}\big\|E_{\lambda_{k_1}}e^{R(B)/N}E_{\lambda_{k_2}}e^{R(B)/N}\,\ldots\,
E_{\lambda_{k_N}}e^{R(B)/N}\big \|
\end{multline}
To estimate the sum in the right hand side of \eqref{rhs}, we apply Lemma \ref{TeL}
with \(F_j=E_{\lambda_j}, R=R(B)\) and obtain the inequality
\begin{equation}
\label{subs}
\sum\limits_{k_1,\ldots,k_N}\big\|E_{\lambda_{k_1}}e^{R(B)/N}E_{\lambda_{k_2}}e^{R(B)/N}\,\ldots\,
E_{\lambda_{k_N}}e^{R(B)/N}\big \|\leq{}n\|e^{R(B)}\|.
\end{equation}
Now we refer to Lemma \ref{Au}. The inequality \eqref{crue} is a consequence of
\eqref{rhs}, \eqref{subs} and \eqref{NeB}.\\
\textbf{2}. The inequality \eqref{crue} is proved under the extra assumption that the matrix \(A\) is diagonal. Now we get rid of the extra assumption, that the matrix \(A\)
is diagonal.

Let \(A\) be an arbitrary matrix from \(\mathfrak{H}_n\). There exists a unitary matrix
\(U\) such that the matrix
\begin{equation}
\label{diA}
A^d=UAU^{\ast}
\end{equation}
is diagonal. Of course \(A^d\in\mathfrak{H}_n\).
Then we define the matrices
\begin{equation}
\label{diM}
B^d=UBU^{\ast}, \quad E_{\lambda_{j}}^d=UE_{\lambda_{j}}U^\ast,\quad
 M_{k_1,\ldots,k_N}^d=U M_{k_1,\ldots,k_N}U^{\ast}.
\end{equation}
The matrices \(E_{\lambda_{j}}^d\) are the spectral projectors of the matrix \(A^d\).
The matrices \(M_{k_1,\ldots,k_N}^d\) can be represented in the form
\begin{equation}
\label{MdR}
M_{k_1,\,\ldots,\,k_N}^d=E_{\lambda_{k_1}}^de^{B^d/N}E_{\lambda_{k_2}}^de^{B^d/N}\,\ldots\,
E_{\lambda_{k_N}}^de^{B^d/N}
\end{equation}
Since the matrix \(A^d\) is diagonal, the inequality
\begin{equation}
\label{crued}
\sum\limits_{k_1,\ldots,k_N}\| M_{k_1,\ldots,k_N}^d \|\leq{}ne^{n\|B^d\|}
\end{equation}
holds. It remains to note that
\begin{equation*}
\|M_{k_1,\,\ldots,\,k_N}^d\|=\|M_{k_1,\,\ldots,\,k_N}\|,\quad \|B^d\|=\|B\|.\qedhere
\end{equation*}
\end{proof}
\begin{lem}
\label{BVM}
Given the matrices \(A\in\mathfrak{H}_n,\,B\in\mathfrak{M}_n\), let \(M_N(d\lambda)\)
be the matrix valued measure which appears in the representation \eqref{LRN} of the
\(N\)-approximant \(L_N(t)\) of the matrix function \(e^{tA+B}\).

The total variation of the measure  \(M_N(d\lambda)\) admits the bound
\begin{equation}\label{AIn}
\sum\limits_{\lambda\in{}ch_N(\sigma(A))}\|{}M_N(\{\lambda\})\|\leq{}ne^{n\|B\|}.
\end{equation}
\end{lem}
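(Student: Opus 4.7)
The plan is a direct chaining of two inequalities that have already been proved. Lemma \ref{reg} majorizes the total variation of the discrete measure $M_N(d\lambda)$ by the sum of the norms of the elementary products $M_{k_1,\ldots,k_N}$, namely
\begin{equation*}
\sum_{\lambda\in ch_N(\sigma(A))}\|M_N(\{\lambda\})\|\leq\sum_{k_1,\ldots,k_N}\|M_{k_1,\ldots,k_N}\|,
\end{equation*}
which is exactly what the triangle inequality plus a regrouping of summands by the value of $(\lambda_{k_1}+\cdots+\lambda_{k_N})/N$ gives. Lemma \ref{CruE} then supplies the crucial bound
\begin{equation*}
\sum_{k_1,\ldots,k_N}\|M_{k_1,\ldots,k_N}\|\leq n\,e^{n\|B\|},
\end{equation*}
obtained from the subordination technology via the rank-one Hermitian dominant $R(B)$. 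Composing the two estimates yields \eqref{AIn} immediately.

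So my proof would be essentially one display: invoke Lemma \ref{reg}, then Lemma \ref{CruE}, and read off the conclusion. There is no genuine obstacle remaining; all the substance lives upstream, in the inverse-triangle Lemma \ref{ITI}, the subordination Lemmas \ref{sol}-\ref{SuE}, and the key telescoping identity $\sum_{k_1,\ldots,k_N}F_{k_1}e^{R/N}\cdots F_{k_N}e^{R/N}=e^R$ used in Lemma \ref{TeL}. The reason to isolate Lemma \ref{BVM} as a named statement is that the uniform-in-$N$ bound $ne^{n\|B\|}$ is precisely the hypothesis required for the weak compactness argument that will extract a limit measure from $\{M_N(d\lambda)\}$ in the subsequent section.
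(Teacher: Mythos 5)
Your proof is exactly the paper's: the author proves Lemma \ref{BVM} in one line, by combining inequality \eqref{IE} from Lemma \ref{reg} with inequality \eqref{crue} from Lemma \ref{CruE}. Your exposition, including the remark that the statement is isolated because the uniform-in-$N$ bound feeds the weak-compactness argument, matches the paper's intent precisely.
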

\begin{proof} We combine the inequalities \eqref{IE} and \eqref{crue}.
\end{proof}
\section{The representation of the matrix function \(\boldsymbol{e^{tA+B}}\)
in the form of the Laplace transform of a matrix valued measure.}
\begin{thm}
\label{MaT}
Let  matrices \(A\) and \(B\) be given, \(A\in\mathfrak{H}_n\), \(B\in\mathfrak{M}_n\). Let \(\lambda_{\textup{min}}\) and
    \(\lambda_{\textup{max}}\) be the smallest and the largest eigenvalues of
    the matrix \(A\), \(\mathfrak{B}\)  be the class of Borel sets of the closed
    interval \([\lambda_{\textup{min}},\lambda_{\textup{min}}]\).

    Then there exists a set function \(M:\mathfrak{B}\to\mathfrak{M}_n\) such that:
    \begin{enumerate}
    \item \(M\) is countably additive and regular on \(\mathfrak{B}\), i.e. \(M\) is a regular Borel \(\mathfrak{M}_n\)-valued measure on \([\lambda_{\textup{min}},\lambda_{\textup{min}}]\).
    \item The equality
    \begin{equation}
    \label{MaEq}
    e^{tA+B}=\int\limits_{[\lambda_{\textup{min}},\lambda_{\textup{min}}]}
    e^{t\lambda}M(d\lambda), \quad \forall\,t\in\mathbb{C},
    \end{equation}
    holds.
    \item If \(B\in\mathfrak{H}_n\), then the measure \(M\) is \(\mathfrak{H}_n\)-valued, i.e. \(M:\mathfrak{B}\to\mathfrak{H}_n\).
    \end{enumerate}
\end{thm}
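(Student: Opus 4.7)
The plan is to combine the uniform total-variation bound of Lemma \ref{BVM} with standard weak-$\ast$ compactness of bounded sets of measures on a compact interval, and then to transfer the convergence $L_N(t)\to e^{tA+B}$ supplied by the Lie product formula into the Laplace representation \eqref{MaEq}.

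First, I would observe that, by Remark \ref{con}, the support of every $M_N$ lies in the common compact interval $I=[\lambda_{\textup{min}},\lambda_{\textup{max}}]$, so each $M_N$ may be viewed as a matrix-valued regular Borel measure on $I$. For each index pair $(p,q)$, the scalar complex measure $(M_N)_{pq}$ has total variation bounded uniformly in $N$ by
\[
\sum_{\lambda\in ch_N(\sigma(A))}\bigl|(M_N)_{pq}(\{\lambda\})\bigr|\le\sum_{\lambda\in ch_N(\sigma(A))}\|M_N(\{\lambda\})\|\le n e^{n\|B\|},
\]
using Lemma \ref{BVM}. Since $C(I)$ is a separable Banach space, the Banach--Alaoglu theorem gives sequential weak-$\ast$ compactness of the closed ball of radius $n e^{n\|B\|}$ in its dual. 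Proceeding entry-by-entry and using a standard diagonal argument, I extract a subsequence $\{N_k\}$ along which each $(M_{N_k})_{pq}$ converges weak-$\ast$ to some complex regular Borel measure $M_{pq}$ on $I$; assembling these entries produces the candidate regular $\mathfrak{M}_n$-valued Borel measure $M$ on $I$ required in (1).

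For (2), I fix $t\in\mathbb{C}$ and apply the weak-$\ast$ convergence to the continuous function $\lambda\mapsto e^{t\lambda}$ on $I$:
\[
L_{N_k}(t)=\int_I e^{t\lambda}\,M_{N_k}(d\lambda)\;\longrightarrow\;\int_I e^{t\lambda}\,M(d\lambda).
\]
The Lie product formula yields $L_{N_k}(t)\to e^{tA+B}$ on the left, giving \eqref{MaEq}. A short uniqueness observation strengthens this: because $M$ is supported on the compact set $I$, differentiating \eqref{MaEq} at $t=0$ recovers all its matrix moments, and Stone--Weierstrass (polynomials dense in $C(I)$) shows that $M$ is the only regular Borel measure on $I$ whose Laplace transform equals $e^{tA+B}$. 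Consequently the whole sequence $M_N$, and not merely the subsequence, converges weakly to this unique $M$.

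For (3), suppose additionally $B\in\mathfrak{H}_n$, so that $tA+B$ and $e^{tA+B}$ are Hermitian for every real $t$. The set function $E\mapsto M(E)^{\ast}$ is a second regular Borel $\mathfrak{M}_n$-valued measure on $I$ whose Laplace transform, evaluated at real $t$, equals $(e^{tA+B})^{\ast}=e^{tA+B}$; by the uniqueness just noted, $M(E)^{\ast}=M(E)$ for every $E\in\mathfrak{B}$, so $M$ is $\mathfrak{H}_n$-valued. The main obstacle I anticipate is precisely the compactness step, since matrix-valued set functions are not directly covered by classical selection theorems for measures; the resolution is the entry-wise reduction above, which is exactly what the operator-norm bound of Lemma \ref{BVM} is designed to enable via $|(M_N)_{pq}(\{\lambda\})|\le\|M_N(\{\lambda\})\|$.
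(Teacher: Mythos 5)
Your proof is correct, and it reaches the theorem by a route that is recognizably different from the paper's, though both use the same two ingredients (the uniform total-variation bound of Lemma~\ref{BVM} and the fact that exponentials, equivalently polynomials, span a dense subspace of $C([\lambda_{\textup{min}},\lambda_{\textup{max}}])$). The paper never extracts a subsequence: it observes that the functionals $x\mapsto\int x\,M_N(d\lambda)$ are uniformly bounded on $C(I)$ and, by \eqref{LRI}, already converge on the dense span of $\{e^{t\lambda}\}_{t\in\mathbb{R}}$; a standard $\varepsilon/3$ argument then yields convergence of $J(x)=\lim_N\int x\,M_N(d\lambda)$ for every $x\in C(I)$, and the Riesz representation theorem (applied entry-by-entry) produces $M$ directly as the representing measure of the limit functional. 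You instead invoke Banach--Alaoglu plus separability to get a weak-$\ast$ convergent subsequence, then use a uniqueness-of-the-Laplace-transform argument (moments at $t=0$ plus Stone--Weierstrass) to identify the limit and, if desired, upgrade to convergence of the full sequence. Your approach is perhaps more textbook-standard and makes the uniqueness explicit, which you then reuse neatly to prove part~(3); the paper's approach is slightly more economical because it bypasses both the subsequence extraction and the uniqueness lemma (for part~(3) it instead appeals again to completeness of the exponentials to conclude $M=M^{\ast}$). Both arguments are correct and of comparable length; the entry-wise reduction you flag as the main obstacle is exactly what makes either version go through.

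One tiny polish: the assertion that the whole sequence $M_N$ converges weakly to $M$ needs the routine "every subsequence has a further subsequence converging to the same unique limit" step, which you leave implicit; it is true but worth a sentence, though it is not actually needed to prove the theorem, since existence of $M$ already follows from the extracted subsequence alone.
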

\begin{proof}
Let us consider the Banach space \(C([\lambda_{\textup{min}},\lambda_{\textup{max}}])\)
of \(\mathbb{C}\)-valued continuous functions on
the interval \([\lambda_{\textup{min}},\lambda_{\textup{max}}]\)
equipped with the standard norm
\begin{equation*}
\|x(\lambda)\|=\max\limits_{\lambda\in[\lambda_{\textup{min}},\lambda_{\textup{max}}]}
|x(\lambda)|, \quad x(\lambda)\in{}C([\lambda_{\textup{min}},\lambda_{\textup{max}}]).
\end{equation*}
The support of each of the measures \(M_N(d\lambda)\) is contained in
the closed interval  \([\lambda_{\textup{min}},\lambda_{\textup{max}}]\).
(See Remark \ref{con}.) The total variation of the measures
\(M_N(d\lambda)\) is bounded from above by some finite value which does not depend
on \(N\). (See Lemma \ref{BVM}.) According to \eqref{LRI}, for each \(t\in\mathbb{R}\)
there exists the limit
\begin{equation*}
\lim\limits_{N\to\infty}
\int\limits_{[\lambda_{\textup{min}},\lambda_{\textup{max}}]}e^{t\lambda}M_N(d\lambda).
\end{equation*}
The system of  functions \(\{e^{t\lambda}\}_{t\in\mathbb{R}}\) is complete in the space \(C([\lambda_{\textup{min}},\lambda_{\textup{max}}])\). Therefore for each
\(x(\lambda)\in{}C([\lambda_{\textup{min}},\lambda_{\textup{max}}])\)
 the limit
\begin{equation}
\label{FE}
J(x)=\lim\limits_{N\to\infty}
\int\limits_{[\lambda_{\textup{min}},\lambda_{\textup{max}}]}x(\lambda)\,M_N(d\lambda)
\end{equation}
exists.
The mapping \(J:C([\lambda_{\textup{min}},\lambda_{\textup{max}}])\to\mathfrak{M}_n\)
is a continuous linear mapping.

Let \(M(d\lambda)\) be the weak limit of the sequence of measures \(M_N(d\lambda)\).
The \(\mathfrak{M}_n\)-valued measure \(M(d\lambda)\) gives the integral representation of the mapping~\(J\):
\begin{equation}
\label{FEr}
J(x)=
\int\limits_{[\lambda_{\textup{min}},\lambda_{\textup{max}}]}x(\lambda)\,M(d\lambda),
\quad x(\lambda)\in{}C([\lambda_{\textup{min}},\lambda_{\textup{max}}]).
\end{equation}
In view of \eqref{LRI},
\[J(e^{t\lambda})=e^{tA+B}.\]
Thus the representation \eqref{MaEq} is established.

If the matrix \(B\) is Hermitian: \(B\in\mathfrak{H}_n\), then
\begin{equation*}
e^{tA+B}=\big(e^{tA+B}\big)^{\ast},\quad \forall\,t\in\mathbb{R}.
\end{equation*}
Hence
\[\int\limits_{[\lambda_{\textup{min}},\lambda_{\textup{min}}]}
    e^{t\lambda}M(d\lambda)=\int\limits_{[\lambda_{\textup{min}},\lambda_{\textup{min}}]}
    e^{t\lambda}(M(d\lambda))^{\ast},\quad    \forall\,t\in\mathbb{R}.\]
Since the system \(\{e^{t\lambda}\}_{t\in\mathbb{R}}\) is complete in the space \(C([\lambda_{\textup{min}},\lambda_{\textup{max}}])\), the measures \(M(d\lambda)\)
and  \((M(d\lambda))^{\ast}\) must coincide. In other words, the measure   \(M(d\lambda)\) is \(\mathfrak{H}_n\)-valued.
\end{proof}

\section{The measure \(\boldsymbol{M(d\lambda)}\) is not necessarily non-negative.}
\label{NNN}
\begin{defn}\label{nNeg}
Let
\(S=(s_{pq})_1^n\in\mathfrak{M}_{n}\). The matrix \(S\) is said to be \emph{non-negative}
if   the inequality
\begin{equation*}
\sum\limits_{1\leq p,q\leq n}s_{pq}\xi_p\overline{\xi_q}\geq 0
\end{equation*}
holds for all complex numbers \(\xi_1,\,\ldots\,,\xi_n\).
\end{defn}
\begin{defn}\label{nNeMe}
Let \(\mathfrak{B}\) be the class of Borel sets of \(\mathbb{R}\), \(M(d\lambda):\mathfrak{B}\to\mathfrak{M}_n\)
be a matrix valued measure. The measure \(M(d\lambda)\) is said to be \emph{non-negative}
if the matrix \(M(\delta)\) is non-negative for every set \(\delta\in\mathfrak{B}\).
\end{defn}

Comparing the equalities \eqref{StR} and \eqref{MaEq}, we conclude that
\begin{equation}
\mu(d\lambda)=\tr{}M(d\lambda).
\end{equation}

The following question arises naturally.\\[2.0ex]
 \textbf{Question}. \emph{Let \(A\in\mathfrak{H}_n\), \(B\in\mathfrak{H}_n\), and \(M(d\lambda)\) be the \(\mathfrak{H}_n\)-valued
measure which appears in the representation \eqref{MaEq} of the function
\(e^{tA+B}\).
Is the measure \(M(d\lambda)\) non-negative}?

The following example shows that the answer to this question is negative already for
\(n=2\).\\[2.0ex]
\textbf{Example}.
Let
\begin{equation}
A=
\begin{bmatrix}
2&0\\
0&0
\end{bmatrix},
\qquad
B=
\begin{bmatrix}
0&1\\
1&0
\end{bmatrix}
\cdot
\end{equation}
The eigenvalues of the matrix \(tA+B\) are
\begin{equation}
\lambda_1(t)=t+\sqrt{t^2+1},\qquad \lambda_2(t)=t-\sqrt{t^2+1}.
\end{equation}
The spectral projectors of the matrix \(tA+B\) corresponding to these eigenvalues are
\begin{equation}
E_1(t)=
\begin{bmatrix}
\frac{\sqrt{t^2+1}+t}{2\sqrt{t^2+1}}&\frac{1}{2\sqrt{t^2+1}}\\[1.5ex]
\frac{1}{2\sqrt{t^2+1}}&\frac{\sqrt{t^2+1}-t}{2\sqrt{t^2+1}}
\end{bmatrix}
\ccomma \quad
E_2(t)=
\begin{bmatrix}
\frac{\sqrt{t^2+1}-t}{2\sqrt{t^2+1}}&-\frac{1}{2\sqrt{t^2+1}}\\[1.5ex]
-\frac{1}{2\sqrt{t^2+1}}&\frac{\sqrt{t^2+1}+t}{2\sqrt{t^2+1}}
\end{bmatrix}
\cdot
\end{equation}
The matrix \(e^{tA+B}\) can be calculated explicitly:
\begin{equation}
e^{tA+B}=e^{\lambda_1(t)}E_1(t)+e^{\lambda_2(t)}E_2(t).
\end{equation}
In particular,
\begin{equation}\label{EEM}
\Big(\frac{d{\ }}{dt}e^{tA+B}\Big)_{|_{t=0}}=D,
\end{equation}
where
\begin{equation}
D=
\begin{bmatrix}
e& \dfrac{e-e^{-1}}{2}\\[2.0ex]
\dfrac{e-e^{-1}}{2}&e^{-1}
\end{bmatrix}
\cdot
\end{equation}
The matrix \(D\) is not non-negative:
\begin{equation}
\textup{det}\,D=\frac{6-e^2-e^{-2}}{4}<0.
\end{equation}
From \eqref{MaEq} it follows that
\begin{equation}
D=\int\limits_{[0,2]}\lambda{}M(d\lambda).
\end{equation}
If the measure \(M(d\lambda)\) would be non-negative, then the matrix \(D\) would be non-negative.


\begin{thebibliography}{1111}

\bibitem{S1} H.\,Stahl. \textit{Proof of the BMV conjecture.} arXiv:1107.4875v1, 1-56, 25 Jul2011.
\bibitem{S2} H.\,Stahl. \textit{Proof of the BMV conjecture.} arXiv:1107.4875v3, 1-25, 17 Aug2012.
\bibitem{S3} H.\,Stahl. \textit{Proof of the BMV conjecture.} Acta Math.,
\textbf{211} (2013), 255-290.
\bibitem{E1} A.\,Eremenko. \textit{Herbert Stahl's proof of the BMV conjecture.} arXiv:1312.6003.
\bibitem{E2} A.\,Eremenko. \textit{Herbert Stahl's proof of the BMV conjecture.}
Sbornik:\ Mathematics, \textbf{206}:1 (2015),
87-92.
\bibitem{K} V.Katsnelson. \textit{On a Special Case of the Herbert Stahl
Theorem}. Integr. Equ. Oper. Theory. Published online: 25 July 2016.
\bibitem{H} G.\,Herzog. \textit{A proof of Lie's Product Formula.} The American Math. Monthley, \textbf{121}:3, 2014, 254-257.
\bibitem{HJ} R.A.\,Horn, C.R.\,Johnson.
\textit{Topics in Matrix Analysis.} Cambridge University Press, Cambradge, 1991.
\bibitem{Ha} B.C.\,Hall. \textit{Lie Groups,\,Lie Algebras and
Representations.} Springer-Verlag, New York, 2003.
\bibitem{Va} V.S.\,Varadarajan. \textit{Lie Grops, Lie Algebras and Their Representations.} Springer-Verlag, New\,York-Berlin-Heidelberg-Tokyo, 1984.  
  \bibitem{Tr}   H.F.\,Trotter. \textit{On the product of semigroups of operators.}
  Proc.\,Amer.\,Math.\,Soc., \textbf{10}, 1959, 545-551.
 \bibitem{Ka1} T.Kato. {On the Trotter-Lie product formula.} Proc. Japan. Acad.,
 \textbf{50} (1974), 694-698.
\bibitem{Si} B.\,Simon. \textit{Functional Integration and Quantum Physics.}
Academic Press, New\,York-San\,Francisco-London, 1979.

\end{thebibliography}
\end{document}